\numberwithin{equation}{section}
\numberwithin{figure}{section}
\theoremstyle{plain}
\newtheorem{thm}{\protect\theoremname}[section]
  \theoremstyle{definition}
  \newtheorem{defn}[thm]{\protect\definitionname}
  \theoremstyle{remark}
  \newtheorem{rem}[thm]{\protect\remarkname}
  \theoremstyle{plain}
  \newtheorem{prop}[thm]{\protect\propositionname}
  \theoremstyle{definition}
  \newtheorem{example}[thm]{\protect\examplename}
\renewcommand{\section}{%
\@startsection{section}{1}%
  \z@{.7\linespacing\@plus\linespacing}{.5\linespacing}%
  {\normalfont\scshape\centering\bfseries}}
\renewcommand{\subsection}{%
\@startsection{subsection}{2}%
  \z@{.5\linespacing\@plus.7\linespacing}{.5\linespacing}%
  {\normalfont\bfseries}}
  \providecommand{\definitionname}{Definition}
  \providecommand{\examplename}{Example}
  \providecommand{\propositionname}{Proposition}
  \providecommand{\remarkname}{Remark}
\providecommand{\theoremname}{Theorem}
\begin{document}
\subjclass[2010]{26A27, 26A24, 47B25, 26A06. }

\title{Sequential Derivatives}

\author{Steen Pedersen }

\address{Department of Mathematics, Wright State University, Dayton OH 45435,
USA}

\email{steen.pedersen@wright.edu}

\author{Joseph P. Sjoberg}

\address{Department of Mathematics, Wright State University, Dayton OH 45435,
USA}

\email{sjoberg.3@wright.edu}
\begin{abstract}
Consider a real valued function defined, but not differentiable at
some point. We use sequences approaching the point of interest to
define and study sequential concepts of secant and cord derivatives
of the function at the point of interest. If the function is the celebrated
Weierstrass function, it follows from some of our results that the
set cord derivates at any point coincides with the extended real line. 
\end{abstract}

\keywords{Sequential derivative, non-differentiable function, generalized derivative,
secant derivative, cord derivative, Weierstrass function, real function. }

\maketitle

\section{Introduction}

Very basic finite difference formulas in numerical analysis approximates
the derivative $f'\left(x\right)$ using a sequence $h_{n}>0$ such
that $h_{n}\to0.$ The two basic formulas are 
\[
\frac{f\left(x+h_{n}\right)-f\left(x\right)}{h_{n}}\to f'\left(x\right)\text{ and }\frac{f\left(x+h_{n}\right)-f\left(x-h_{n}\right)}{2h_{n}}\to f'\left(x\right).
\]
The first formula is \emph{Newton's difference quotient} and determines
the slope of a secant line of the graph of $f.$ Roughly the Newton
difference quotient approximates the slope of the tangent with an
error proportional to $h_{n}.$ In Newton's difference quotient we
could also use $h_{n}<0.$ The second formula is the \emph{symmetric
difference quotient} and determines the slope of a cord of the graph
of $f.$ Roughly the symmetric difference quotient approximates the
slope of the tangent with an error proportional to $h_{n}^{2}.$ 

In this note we study the limits of the Newton difference quotients
and of the symmetric difference quotients, when the function $f$
is continuous at $x,$ but fails to have a derivative at $x.$ Let
$N_{f,x}$ be the set of limits of the Newton difference quotient
for all $h_{n}$ such that the limit exists in the extended real numbers.
And let $S_{f,x}$ be the set of limits of the difference quotient
\[
\frac{f\left(x+h_{n}\right)-f\left(x-k_{n}\right)}{h_{n}+k_{n}}
\]
for all $h_{n},k_{n}>0$ such that the limit exists in the extended
real numbers. When $k_{n}=h_{n}$ this is the symmetric difference
quotient. Among our results are (\emph{a}) $N_{f,x}$ and $S_{f,x}$
are closed subsets of the extended real numbers, (\emph{b}) any closed
subset of the extended real numbers equals $N_{f,0}$ for some $f,$
(\emph{c}) $N_{f,x}$ is a subset of $S_{f,x},$ and (\emph{d}) if
$f$ is continuous on an interval then $N_{f,x}$ and $S_{f,x}$ are
intervals. In part of Section \ref{sec:Interactions-Between-Cord}
we assume $f$ is defined on a set of the form $\left\{ 0,h_{1},-k_{1},h_{2},-k_{2},\ldots\right\} ,$
$f\left(0\right)=0,$ and we assume the Newton difference quotients
\[
\frac{f\left(h_{n}\right)}{h_{n}}\to R\text{ and }\frac{f\left(-k_{n}\right)}{-k_{n}}\to L
\]
converge to real numbers. We show that the set of limits $S$ of the
sequences 
\[
\frac{f\left(x+h_{i_{n}}\right)-f\left(x-k_{j_{n}}\right)}{h_{i_{n}}+k_{j_{n}}}
\]
obtained by considering subsequences $h_{i_{n}},k_{j_{n}}$ of the
sequences $h_{n},k_{n},$ depends on properties of the sequences $h_{n},k_{n}.$
For example, we show, (\emph{e}) if $h_{n},k_{n}$ both decay to zero
at the same polynomial rate, then $S$ is the interval with endpoints
$R$ and $L,$ (\emph{f}) if $h_{n},k_{n}$ decay at the same exponetial
rate, then $S$ is a discrete set whose only accumulation points are
$R$ and $L,$ and (\emph{g}) if $h_{n},k_{n}$ decay at the differten
exponetial rates, then whether $S$ is a discrete set or an interval
depend on the rates of decay. 

\section{Sequential Secant Derivatives\label{sec:Sequential-Secant-Derivatives}}

We consider derivatives of real valued functions, our derivatives
are defined in terms of sequences and we allow them to be infinite.
Denote the real line by $\mathbb{R}$ and the \emph{extended real
line} $\mathbb{R}\cup\left\{ \pm\infty\right\} $ by $\overline{\mathbb{R}}.$ 
\begin{defn}
Let $f$ be a real valued function defined a the subset $D$ of $\mathbb{R}$
and let $x\in D.$ We say $L$ in $\overline{\mathbb{R}}$ is a \emph{sequential
secant derivative of $f$ at $x,$} if there is a sequence $h_{n}\neq0,$
such that $h_{n}\to0,$ $x+h_{n}\in D,$ and 
\begin{equation}
Df\left(x,h_{n}\right):=\frac{f\left(x+h_{n}\right)-f\left(x\right)}{h_{n}}\underset{n\to\infty}{\longrightarrow}L.\label{eq:Def-Sequential-Derivative}
\end{equation}
We say $L$ is a \emph{right hand sequential secant derivative of
$f$ at $x,$} if $h_{n}>0,$ and a \emph{left hand sequential secant
derivative of $f$ at $x,$} if $h_{n}<0.$ We will abbreviate $h_{n}>0$
and $h_{n}\to0$ as $n\to\infty$ by writing $h_{n}\searrow0.$ 
\end{defn}

Clearly, $f$ is differentiable at $x$ with derivative $L$ if and
only if (\ref{eq:Def-Sequential-Derivative}) holds for every $h_{n}\to0$
with $x+h_{n}\in D.$ The details can be found in any beginning analysis
book, e.g., \cite{Str00} or \cite{Ped15}. 
\begin{rem}
\label{sec-1-rem:Weierstrass}The definition of sequential secant
derivative is motivated by Weierstrass' proof, see \cite{Wei86} or
\cite{Ped15}, that the Weierstrass functions 
\[
W\left(x\right):=\sum_{k=0}^{\infty}a^{n}\cos\left(b^{n}\pi x\right),
\]
where $0<a<1$ is a real number, $b$ is an odd integer and $ab>1+\frac{3\pi}{2},$
have $\pm\infty$ (in our terminology) as sequential secant derivatives
at any point $x.$ More precisely, Weierstrass showed there are sequences
$h_{n}^{\pm}\searrow0,$ such that $\left|DW\left(x,-h_{n}^{-}\right)\right|\to\infty,$
$\left|DW\left(x,h_{n}^{+}\right)\right|\to\infty,$ and $DW\left(x,-h_{n}^{-}\right)$
and $DW\left(x,h_{n}^{+}\right)$ have different signs for all sufficiently
large $n.$ 
\end{rem}

To state and prove our results we need some terminology about subsets
of the extended real numbers, this terminology is introduced in the
following definition. 
\begin{defn}
\label{def:closed-dense-isolated-interval}Let $S$ be a subset of
the extended real line $\overline{\mathbb{R}}.$ 

(\emph{a}) We say $S$ is \emph{closed,} if any $L\in\overline{\mathbb{R}}$
for which there is a sequence of real numbers $L_{n}\in S\cap\mathbb{R}$
such that $L_{n}\to L$ must be in $S.$ 

(\emph{b}) We say a set of real numbers \emph{$A$ is dense in $S,$}
if for any $L$ in $S,$ there is a sequence $a_{k}$ in $A,$ such
that $a_{k}\to L.$ 

(\emph{c}) A point $L$ in $S$ \emph{is isolated in $S,$} if no
sequence $a_{k}$ of points in $S$ with $a_{k}\neq L$ satisfies
$a_{k}\to L.$ 

(\emph{d}) A \emph{closed interval in $\overline{\mathbb{R}}$} is
a set of the form $\left[a,b\right]:=\left\{ t\in\overline{\mathbb{R}}:a\leq t\leq b\right\} ,$
where $a<b$ are in $\overline{\mathbb{R}}.$ 
\end{defn}

For a bounded set $S$ this notion of ``closed'' agrees with the
usual notion of a closed subset of the real line and for unbounded
sets $S$ it agrees with the notion of a closed subset of the two-point-compactification
of the real line. Similar remarks apply to the other terms in Definition
\ref{def:closed-dense-isolated-interval}.

We begin by showing that the set of all secant derivatives at a point
is a closed subset of the extended real numbers. Conversely, we show
that any non-empty closed subset of the extended real numbers is the
set of secant derivatives at $0$ of some function defined on the
closed interval $\left[0,1\right].$ 
\begin{thm}
\label{thm:Seq-a-closed-set}Let $f$ be a real valued function defined
a the subset $D$ of $\mathbb{R}$ and let $x\in D.$ The set of sequential
secant derivatives of $f$ at $x$ is a closed subset of $\overline{\mathbb{R}}$. 
\end{thm}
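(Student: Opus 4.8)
The plan is to show that the set of sequential secant derivatives, call it $N_{f,x}$, is closed in $\overline{\mathbb{R}}$. By Definition \ref{def:closed-dense-isolated-interval}(a), I must take an arbitrary $L \in \overline{\mathbb{R}}$ together with a sequence of real numbers $L_m \in N_{f,x} \cap \mathbb{R}$ with $L_m \to L$, and produce a single sequence $h_n \neq 0$, $h_n \to 0$, $x + h_n \in D$, with $Df(x, h_n) \to L$. The natural strategy is a diagonal argument: each $L_m$ is itself a limit of difference quotients, so for each $m$ I have a sequence witnessing $L_m$, and I want to thread through these sequences to build one sequence converging to $L$.

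Concretely, I would proceed as follows. Fix $m$. Since $L_m \in N_{f,x}$, there is a sequence $h^{(m)}_k \neq 0$ with $h^{(m)}_k \to 0$, $x + h^{(m)}_k \in D$, and $Df(x, h^{(m)}_k) \to L_m$ as $k \to \infty$. Hence for each $m$ I can choose a single index $k(m)$ large enough that the chosen term $g_m := h^{(m)}_{k(m)}$ satisfies simultaneously $|g_m| < 1/m$ and $|Df(x, g_m) - L_m| < 1/m$ (when $L_m$ is finite; the infinite case is handled separately, see below). Setting $h_m := g_m$, I get $h_m \neq 0$, $x + h_m \in D$, and $h_m \to 0$ since $|h_m| < 1/m$. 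The triangle inequality then gives $|Df(x, h_m) - L| \leq |Df(x, h_m) - L_m| + |L_m - L| < 1/m + |L_m - L| \to 0$, so $Df(x, h_m) \to L$, proving $L \in N_{f,x}$.

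I would handle the possibility $L = \pm\infty$ as a separate but parallel case, since the triangle-inequality estimate above implicitly assumes finiteness. Suppose $L = +\infty$ (the case $-\infty$ is symmetric). Then $L_m \to +\infty$, and for each $m$ I choose $k(m)$ so that $|g_m| < 1/m$ and $Df(x, g_m) > L_m - 1$; since $L_m \to +\infty$ this forces $Df(x, g_m) \to +\infty$. The resulting sequence $h_m = g_m$ witnesses $+\infty \in N_{f,x}$. In all cases the key point is that one can extract, from each witnessing sequence, a single term that is both close to $L_m$ in value and small in magnitude, and that these two smallness conditions can be met at once because both hold for all large $k$.

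The main subtlety — more a bookkeeping point than a genuine obstacle — is the diagonal selection: I must be careful that each chosen term $g_m$ comes from the $m$-th witnessing sequence and that the magnitude condition $|g_m| < 1/m$ and the value condition hold simultaneously, which is guaranteed because, for fixed $m$, both $h^{(m)}_k \to 0$ and $Df(x, h^{(m)}_k) \to L_m$ as $k \to \infty$, so all sufficiently large $k$ work. A second point worth noting is that the definition of closed in \ref{def:closed-dense-isolated-interval}(a) only quantifies over approximating sequences drawn from $N_{f,x} \cap \mathbb{R}$, so I never need to approximate $L$ by infinite members of $N_{f,x}$; the finite approximants suffice, which keeps the argument clean. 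No new machinery is required beyond this diagonal extraction.
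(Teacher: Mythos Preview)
Your proposal is correct and follows essentially the same diagonal-extraction argument as the paper's proof. In fact you are slightly more careful: you explicitly enforce $|g_m| < 1/m$ so that the diagonal sequence tends to $0$, and you treat the case $L = \pm\infty$ separately, both of which the paper leaves implicit.
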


\begin{proof}
Suppose the real numbers $L_{n}$ are sequential secant derivatives
of $f$ at $x,$ $L\in\overline{\mathbb{R}},$ and $L_{n}\to L.$
We must show $L$ is a secant derivative of $f$ at $x.$ For each
$n,$ let $h_{n,m}\neq0$ be such that $h_{n.m}\to0$ as $m\to\infty$
and 
\[
\frac{f\left(x+h_{n,m}\right)-f\left(x\right)}{h_{n,m}}\underset{m\to\infty}{\longrightarrow}L_{n}.
\]
Pick $N_{n}$ such that 
\[
\left|\frac{f\left(x+h_{n,N_{n}}\right)-f\left(x\right)}{h_{n,N_{n}}}-L_{n}\right|<\frac{1}{n}.
\]
It follows that 
\[
\frac{f\left(x+h_{n,N_{n}}\right)-f\left(x\right)}{h_{n,N_{n}}}\underset{n\to\infty}{\longrightarrow}L.
\]
Hence $L$ is sequential secant derivative of $f$ at $x.$ 
\end{proof}
Similarly, the set of right hand (and the set of left hand) sequential
secant derivatives of $f$ at a point $x$ are closed subsets of $\overline{\mathbb{R}}.$ 

In the following we explore the structure of the sets of sequential
secant derivatives of functions defined on intervals. For simplicity
we state the results for right hand sequential derivatives at $0$
for functions defined on the closed interval $\left[0,1\right].$ 
\begin{thm}
\label{thm:Seq-any-closed-set}Given any non-empty closed subset $S$
of the extended real numbers. there is a real valued function $f$
defined on the closed interval $\left[0,1\right],$ such that the
set of right hand sequential secant derivatives of $f$ at $0$ equals
the set $A.$ 
\end{thm}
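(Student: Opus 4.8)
The plan is to reduce the statement to a question about cluster sets of a scalar function near $0$, and then build an explicit step function realizing a prescribed cluster set. Throughout I may assume $f(0)=0$ (replacing $f$ by $f-f(0)$ changes no difference quotient at $0$), so that every right hand difference quotient at $0$ has the form $Df(0,h)=f(h)/h=:g(h)$ with $h\in(0,1]$. With this reduction, the set of right hand sequential secant derivatives of $f$ at $0$ is exactly the \emph{cluster set} of $g$ at $0^{+}$, namely the set of all $L\in\overline{\mathbb{R}}$ for which some sequence $h_{n}\searrow0$ satisfies $g(h_{n})\to L$. Thus it suffices to produce, for the given nonempty closed $S$, a function $g\colon(0,1]\to\mathbb{R}$ whose cluster set at $0^{+}$ equals $S$; the desired $f$ is then $f(x)=x\,g(x)$ with $f(0)=0$.

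To treat $\pm\infty$ on an equal footing I would regard $\overline{\mathbb{R}}$ as the compact metric space with metric $\rho(a,b)=\left|\arctan a-\arctan b\right|$; by the remark following Definition \ref{def:closed-dense-isolated-interval} the notion of ``closed'' used here agrees with $\rho$-closedness, so $S$ is $\rho$-compact and hence separable. I would fix a countable set $\{d_{1},d_{2},\dots\}$ that is dense in $S$ (repeating entries if $S$ is finite), and then form an interleaved list in which each $d_{i}$ occurs infinitely often, for instance the triangular enumeration $d_{1};\,d_{1},d_{2};\,d_{1},d_{2},d_{3};\dots$. Finally I would turn this into a sequence of \emph{real} numbers $(t_{n})$ by leaving finite entries unchanged and, on the $k$-th occurrence of the entry $+\infty$ (resp. $-\infty$), replacing it by the real number $k$ (resp. $-k$).

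The key claim is that the $\rho$-cluster set of the real sequence $(t_{n})$ is exactly $S$. One inclusion is routine: every finite $d_{i}$ appears infinitely often, and every occurrence of $+\infty$ (resp. $-\infty$) has been replaced by values diverging to $+\infty$ (resp. $-\infty$), so each point of $\{d_{i}\}$ is a cluster value; since the cluster set is $\rho$-closed it contains $\overline{\{d_{i}\}}=S$. For the reverse inclusion I would verify that a convergent subsequence $t_{n_{k}}\to L$ forces $L\in S$: if $L$ is finite, then for large $k$ the terms $t_{n_{k}}$ are bounded, hence (the divergent replacement values being eventually far from $L$) are finite entries $d_{i}$ converging to $L$, so $L\in\overline{\{d_{i}\}}=S$; if $L=\pm\infty$, then infinitely many $t_{n_{k}}$ are either replacement values or finite $d_{i}$'s tending to $\pm\infty$, and in either case $\pm\infty\in S$ because all entries lie in the closed set $S$.

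With $(t_{n})$ in hand I would define $g$ to be the step function constant equal to $t_{n}$ on the interval $\left(\tfrac{1}{n+1},\tfrac{1}{n}\right]$ for each $n\ge1$, so that the range of $g$ on any neighborhood $(0,\tfrac{1}{m}]$ of $0$ is $\{t_{n}:n\ge m\}$. Because $g$ takes only the values $t_{n}$, a limit $g(h_{j})\to L$ along $h_{j}\searrow0$ is the same as a limit $t_{m_{j}}\to L$ with $m_{j}\to\infty$; hence the cluster set of $g$ at $0^{+}$ coincides with the cluster set of the sequence $(t_{n})$, which is $S$. Setting $f(x)=x\,g(x)$ and $f(0)=0$ then yields the required function. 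I expect the only genuine obstacle to lie in the bookkeeping of the previous paragraph --- confirming that the interleaving together with the $\pm\infty$-replacement produces a sequence whose cluster set is \emph{exactly} $S$ and introduces no spurious cluster points; the step-function choice is precisely what keeps the range of $g$ inside $\{t_{n}\}$ and thereby prevents extraneous derivatives from appearing. Consistency with Theorem \ref{thm:Seq-a-closed-set}, which shows the target set must be closed, is automatic since $S$ was assumed closed.
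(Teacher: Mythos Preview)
Your proof is correct and follows essentially the same strategy as the paper's: reduce to prescribing the cluster set of $g(x)=f(x)/x$ at $0^{+}$, choose a countable dense subset of $S$, and make $g$ piecewise constant with those values on intervals shrinking to $0$ (the paper's partition of each $(\xi_{n+1},\xi_{n}]$ into $n+1$ pieces carrying $a_{1},\dots,a_{n}$ is exactly your triangular interleaving). The only real difference is the treatment of $\pm\infty$: the paper inserts pieces where $f(x)=\sqrt{x}$ and splits into cases according to whether $\pm\infty$ is isolated in $S$, while your replacement-value device together with the $\arctan$ metric handles all cases uniformly and avoids that case analysis.
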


\begin{proof}
Let $S$ be a closed subset of the extended real numbers. There are
several cases depending on whether or not $\pm\infty$ are in $S$
or are isolated points of $S.$ We will give a proof in case for the
situation where $\infty$ is an isolated point of $S$ and either
$-\infty$ is not in $S$ or $-\infty$ is not an isolated point of
$S.$ The modifications needed for the other cases are left for the
reader. 

Let $a_{1},a_{2},\ldots$ be a countable dense subset of $S\setminus\left\{ \infty\right\} $
consisting of real numbers.  Let $f\left(0\right)=0.$ Let $\xi_{n}>0$
be a strictly decreasing sequence such that $\xi_{1}=1$ and $\xi_{n}\to0.$
For each $n,$ partition the interval $\left(\xi_{n+1},\xi_{n}\right]$
into $n+1$ subintervals $\left(\xi_{n,k-1},\xi_{n,k}\right],$ $k=1,2,\ldots,n+1.$
For $x$ in an interval of the form $\left(\xi_{n,k-1},\xi_{n,k}\right]$
with $1\leq k\leq n$ let $f\left(x\right)=a_{k}x,$ for $x$ in an
interval of the form $\left(\xi_{n,n},\xi_{n,n+1}\right]$ let $f\left(x\right)=\sqrt{x}$.
Then the graph of $f$ contains segments of the graph of the equation
$y=a_{k}x$ arbitrarily close to the origin, hence all the numbers
$a_{1},a_{2},\ldots$ are right hand sequential secant derivatives
of $f$ at $0.$ Similarly, the graph of $f$ contains segments of
the graph of $y=\sqrt{x}$ arbitrarily close to the origin, hence
$+\infty$ is a right hand sequential secant derivative of $f$ at
$0.$ 

Suppose $L$ is a right hand sequential secant derivative of $f$
at $0.$ Then there is a sequence $h_{m}\searrow0$ such that $\tfrac{f\left(h_{m}\right)}{h_{m}}\to L.$
Now each $h_{m}$ is in one of the intervals in $\left(\xi_{n,k-1},\xi_{n,k}\right].$
If $k\leq n,$ then $\tfrac{f\left(h_{m}\right)}{h_{m}}=a_{k},$ where
$k=k\left(m\right)$ depends on $m.$ If $k=n+1,$ then $\tfrac{f\left(h_{m}\right)}{h_{m}}=\frac{1}{\sqrt{h_{m}}}.$
Since $\frac{1}{\sqrt{h_{m}}}\to\infty$ and $\infty$ is isolated
in $S,$ it follows that, either $\tfrac{f\left(h_{m}\right)}{h_{m}}=\frac{1}{\sqrt{h_{m}}}$
for all but a finite number of $m$ or $\tfrac{f\left(h_{m}\right)}{h_{m}}=a_{k\left(m\right)}$
for all but a finite number of $m.$ In the first case $L=\infty\in S,$
in the second case $L\in S,$ since $a_{k\left(m\right)}\to L$ as
$m\to\infty$ and $S\setminus\left\{ \infty\right\} $ is closed. 
\end{proof}
By Theorem \ref{thm:Seq-any-closed-set} any closed set is the set
of sequential derivatives at a point of a real valued function defined
on an interval. The following theorem shows that if $f$ is continuous
on the interval the conclusion is completely different, in fact, then
the set of sequential secant derivates must be a single point or a
closed interval. 
\begin{thm}
\label{sec-1-thm:continuous}If  $f:\left[0,1\right]\to\mathbb{R}$
is continuous, then the set of right hand sequential secant derivatives
of $f$ at $0$ is either a single point or a closed interval in $\overline{\mathbb{R}}$. 
\end{thm}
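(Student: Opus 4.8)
The plan is to reinterpret the set in question as the cluster set (the set of subsequential limits) at $0^{+}$ of the single-variable function $g(h):=\frac{f(h)-f(0)}{h}$, which is continuous on $(0,1]$ since $f$ is continuous and $h\neq 0$ there. With this reinterpretation, $L$ is a right hand sequential secant derivative of $f$ at $0$ precisely when there is a sequence $h_{n}\searrow 0$ with $g(h_{n})\to L$; call this set $C$. First I would record two easy facts. The set $C$ is nonempty, because $\overline{\mathbb{R}}$ is sequentially compact, so along any $h_{n}\searrow 0$ the values $g(h_{n})$ have a convergent subsequence whose limit lies in $C$; and $C$ is closed, which is exactly the content of Theorem \ref{thm:Seq-a-closed-set} applied to right hand derivatives. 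Since a nonempty closed subset of $\overline{\mathbb{R}}$ is a single point or a closed interval precisely when it is connected, the whole theorem reduces to showing that $C$ is connected.

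To prove connectivity I would argue by contradiction, exploiting the continuity of $g$ through the intermediate value theorem. Suppose $C$ is neither a single point nor an interval. Then $C$ has a gap: there are $a<b$ in $C$ and a point $c$ with $a<c<b$ and $c\notin C$. Because $C$ is closed its complement is open, so I may take $c$ to be a finite real number lying in the gap and fix $\epsilon>0$ with $(c-\epsilon,c+\epsilon)\cap C=\emptyset$. The crucial observation is that $c\notin C$ forces $g$ to avoid a band around $c$ near the origin: there is $\delta>0$ with $\left|g(h)-c\right|\geq\epsilon$ for all $0<h<\delta$, since otherwise one could extract $h_{n}\searrow 0$ with $g(h_{n})\to c$, placing $c$ in $C$.

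Now I use that $a$ and $b$ lie on opposite sides of this band. Because $a\in C$ and $(c-\epsilon,c+\epsilon)\cap C=\emptyset$ we have $a\leq c-\epsilon$, and similarly $b\geq c+\epsilon$. From the sequences realizing $a$ and $b$ I can then select points $h,h'\in(0,\delta)$ with $g(h)\leq c-\epsilon$ and $g(h')\geq c+\epsilon$. Applying the intermediate value theorem to the continuous function $g$ on the closed interval between $h$ and $h'$, which is contained in $(0,\delta)$, produces a point $t\in(0,\delta)$ with $g(t)=c$, contradicting $\left|g(h)-c\right|\geq\epsilon$ on $(0,\delta)$. Hence no gap exists, so $C$ is connected, and being nonempty and closed it must be a single point or a closed interval in $\overline{\mathbb{R}}$.

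I expect the main obstacle to be the bookkeeping in the connectivity step, in particular justifying that the gap value $c$ can be chosen finite even when $a$ or $b$ equals $\pm\infty$, and confirming that $h$ and $h'$ can be taken simultaneously inside $(0,\delta)$ so that the intermediate value theorem applies on a subinterval of $(0,\delta)$. These points are routine once the band-avoidance property of a value $c\notin C$ is isolated, which is the conceptual heart of the argument.
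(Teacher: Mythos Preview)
Your argument and the paper's share the same engine: the continuity of $g(h)=\frac{f(h)-f(0)}{h}$ on $(0,1]$ together with the Intermediate Value Theorem. The paper packages this more directly. Given $L<K<M$ in the set, it takes sequences $h_n,k_n\searrow 0$ with $g(h_n)\to L$ and $g(k_n)\to M$, passes to subsequences so that $h_1>k_1>h_2>k_2>\cdots$ and $g(h_n)<K<g(k_n)$ for every $n$, and then applies the IVT on each interval $[k_n,h_n]$ to produce $\ell_n$ with $g(\ell_n)=K$ exactly. This yields the intermediate value $K$ constructively, with no contradiction, no appeal to closedness, and no band argument. Your contrapositive/connectivity framing is a legitimate alternative but somewhat longer for the same payoff.

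One genuine slip to repair in your band-avoidance step: you claim that if no such $\delta$ exists then one can extract $h_n\searrow 0$ with $g(h_n)\to c$. The negation only gives $h_n\searrow 0$ with $g(h_n)\in(c-\epsilon,c+\epsilon)$; a subsequential limit of $g(h_n)$ then lies in the \emph{closed} interval $[c-\epsilon,c+\epsilon]$ and could be one of the endpoints $c\pm\epsilon$, which your hypothesis $(c-\epsilon,c+\epsilon)\cap C=\emptyset$ does not exclude from $C$. The fix is immediate: since $C$ is closed and $c\notin C$, choose $\epsilon>0$ so that $[c-\epsilon,c+\epsilon]\cap C=\emptyset$. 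Then any subsequential limit would lie in $C\cap[c-\epsilon,c+\epsilon]=\emptyset$, the desired contradiction, and the subsequent selection of $h,h'\in(0,\delta)$ with $g(h)\le c-\epsilon$ and $g(h')\ge c+\epsilon$ also follows cleanly from this stronger disjointness combined with the band-avoidance. With that adjustment your argument goes through.
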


\begin{proof}
Replacing $f$ by $f\left(x\right)-f\left(0\right),$ if necessary,
we may assume $f\left(0\right)=0.$ Suppose $L<M$ are right hand
derivatives of $f$ at $0.$ Let $L<K<M.$ We must show $K$ is a
right hand sequential secant derivative of $f$ at $0.$ Suppose $h_{n}\searrow0,$
$k_{n}\searrow0,$ $\tfrac{f\left(h_{n}\right)}{h_{n}}\to L$ and
$\tfrac{f\left(k_{n}\right)}{k_{n}}\to M.$ By passing to subsequences,
if necessary, we may assume 
\[
h_{1}>k_{1}>h_{2}>k_{2}>h_{3}>k_{3}>\cdots
\]
and 
\[
\frac{f\left(h_{n}\right)}{h_{n}}<K<\frac{f\left(k_{n}\right)}{k_{n}}
\]
for all $n.$ Since $\tfrac{f\left(x\right)}{x}$ is continuous on
the interval $\left[k_{n},h_{n}\right],$ it follows from the Intermediate
Value Theorem, that there are $\ell_{n},$ such that $k_{n}<\ell_{n}<h_{n}$
and $\tfrac{f\left(\ell_{n}\right)}{\ell_{n}}=K.$ This completes
the proof. 
\end{proof}
The functions constructed in the proof of Theorem \ref{thm:Seq-any-closed-set}
are not continuous. However, we have the following analog of Theorem
\ref{thm:Seq-any-closed-set} for continuous functions.
\begin{thm}
\label{sec-1-thm:sine} Any closed subinterval of the extended real
line is the set of right hand sequential secant derivatives at $0$
of some continuous function defined on the closed interval $\left[0,1\right].$
\end{thm}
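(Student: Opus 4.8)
The plan is to realize the prescribed interval $[a,b]$ as the cluster set at $0$ of the difference quotient itself. I will look for $f$ of the form $f(x)=x\,\phi(x)$ with $f(0)=0$, where $\phi$ is continuous on $(0,1]$. Then for any $h_n\searrow 0$ we have $\tfrac{f(h_n)-f(0)}{h_n}=\phi(h_n)$, so the set of right hand sequential secant derivatives of $f$ at $0$ is precisely the set of subsequential limits of $\phi(x)$ as $x\searrow 0$. Moreover $f$ will be continuous on $[0,1]$ as soon as $x\,\phi(x)\to 0$ as $x\searrow 0$. Thus the whole problem reduces to constructing a continuous $\phi$ on $(0,1]$ with $x\,\phi(x)\to 0$ whose cluster set at $0$ is exactly $[a,b]$; Theorem \ref{sec-1-thm:continuous} already guarantees such a cluster set is an interval, so the task is to hit the right one. (The degenerate case $a=b$ is trivial, taking $f(x)=ax$, so I assume $a<b$.)

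First suppose both endpoints are finite. Put $c=\tfrac{a+b}{2}$ and $r=\tfrac{b-a}{2}$ and take $\phi(x)=c+r\sin(1/x)$, i.e.\ $f(x)=x\bigl(c+r\sin(1/x)\bigr)$. Then $|f(x)|\le |x|\,(|c|+r)\to 0$, so $f$ is continuous at $0$, and $\phi(x)\in[a,b]$ for every $x$. Since $\sin(1/x)$ assumes every value of $[-1,1]$ on each interval $(0,\delta)$, every point of $[a,b]$ is a subsequential limit of $\phi$, while closedness of $[a,b]$ forbids any other limit. Hence the cluster set is exactly $[a,b]$.

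For an infinite endpoint I would replace the constant amplitude $r$ by a slowly growing one. For $[a,+\infty]$ with $a$ finite take $\phi(x)=a+\tfrac12\bigl(1+\sin(1/x)\bigr)x^{-1/2}$; for $[-\infty,b]$ with $b$ finite take $\phi(x)=b-\tfrac12\bigl(1+\sin(1/x)\bigr)x^{-1/2}$; and for $[-\infty,+\infty]=\overline{\mathbb{R}}$ take $\phi(x)=x^{-1/2}\sin(1/x)$. In every case $x\,\phi(x)\to 0$, since $x\cdot x^{-1/2}=x^{1/2}\to 0$, so $f(x)=x\,\phi(x)$ extends continuously by $f(0)=0$. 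In the one-sided cases the sign bound $\phi(x)\ge a$ (respectively $\phi(x)\le b$) shows that no cluster value can lie below $a$ (above $b$); in the two-sided case the target $\overline{\mathbb{R}}$ is the whole space, so there is nothing to exclude.

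The step I expect to need the most care is showing that every \emph{finite} value $M$ strictly inside the interval is genuinely attained as a limit. Here I would argue period by period using the Intermediate Value Theorem: on each maximal interval on which $1/x$ traverses one full period of the sine, $\phi$ varies continuously from a trough, where $\sin(1/x)=-1$ and $\phi$ equals its interval endpoint (namely $a$, or $-x^{-1/2}$), up to a peak, where $\sin(1/x)=1$ and $\phi$ equals $a+x^{-1/2}$ (or $x^{-1/2}$). Because the amplitude $x^{-1/2}\to\infty$ as $x\searrow 0$, for all sufficiently small such periods the range of $\phi$ across the period contains $M$, so the Intermediate Value Theorem yields a point $h_n$ in that period with $\phi(h_n)=M$; letting the periods shrink gives $h_n\searrow 0$ with $\phi(h_n)=M\to M$. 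Finally the infinite endpoint(s) are attained along the peaks (and, in the two-sided case, the troughs) where $\sin(1/x)=\pm 1$, so $\phi\to\pm\infty$ there. Combining the attainment of every point of $[a,b]$ with the amplitude/sign bounds and closedness of $[a,b]$ identifies the cluster set of $\phi$ exactly with $[a,b]$, which completes the proof.
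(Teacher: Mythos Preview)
Your proof is correct and follows essentially the same approach as the paper: both write $f(x)=x\,\phi(x)$ with $\phi$ built from $\sin(1/x)$ (and a factor $x^{-1/2}$ for the unbounded cases), so that the difference quotient is $\phi(h_n)$ and the problem reduces to controlling the cluster set of $\phi$ at $0$. Your finite-interval formula $\phi(x)=\tfrac{a+b}{2}+\tfrac{b-a}{2}\sin(1/x)$ is in fact tidier than the paper's piecewise clipping $\max\{a,\,b\sin(1/x)\}$, and you treat the semi-infinite cases explicitly where the paper leaves them to the reader, but the underlying idea is the same.
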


\begin{proof}
If $f\left(x\right)=\sqrt{x}\sin\left(\frac{1}{x}\right)$ when $x>0$
and $f\left(0\right)=0,$ then $f$ is continuous and the collection
of all right hand secant derivatives equals the extended real line. 

Suppose $a<b$ are real numbers. If $\left|a\right|\leq\left|b\right|,$
let $f\left(0\right)=0$ and for $x>0$ let 
\[
f_{a,b}\left(x\right):=\begin{cases}
bx\sin\left(\frac{1}{x}\right) & \text{when }b\sin\left(\frac{1}{x}\right)\geq a\\
ax & \text{when }b\sin\left(\frac{1}{x}\right)<a
\end{cases}.
\]
If $\left|b\right|<\left|a\right|,$ let $f\left(0\right)=0$ and
for $x>0$ let 
\[
f_{a,b}\left(x\right):=\begin{cases}
ax\sin\left(\frac{1}{x}\right) & \text{when }b\sin\left(\frac{1}{x}\right)\geq a\\
bx & \text{when }b\sin\left(\frac{1}{x}\right)<a
\end{cases}.
\]
In either case, $f$ is continuous on the closed interval $\left[0,1\right]$
and the set of right hand sequential secant derivatives of $f$ at
the origin equals the closed interval $\left[a,b\right].$ 

The cases where one endpoint is infinite and the other endpoint is
finite is left for the reader.
\end{proof}

\section{Sequential Cord Derivatives }

In Section \ref{sec:Sequential-Secant-Derivatives} we considered
the slopes of secant lines with endpoints $\left(x,f\left(x\right)\right)$
and $\left(x+h_{n},f\left(x+h_{n}\right)\right)$ for sequences $h_{n}\neq0.$
In this section we consider the slopes of cords with endpoints $\left(x-k_{n},f\left(x-k_{n}\right)\right)$
and $\left(x+h_{n},f\left(x+h_{n}\right)\right)$ for sequences $h_{n},k_{n}>0.$ 
\begin{defn}
We say $L\in\overline{\mathbb{R}}$ is a \emph{sequential cord derivative
of $f$ at $x,$} if there is are sequences $h_{n}\searrow0,$ and
$k_{n}\searrow0,$ such that $x+h_{n}\in D,$ $x-k_{n}\in D,$ and
\[
\frac{f\left(x+h_{n}\right)-f\left(x-k_{n}\right)}{h_{n}+k_{n}}\underset{n\to\infty}{\longrightarrow}L.
\]
For the sake of brevity, we will often say \emph{cord derivative}
in place of \emph{sequential cord derivative}.
\end{defn}

We begin by showing that, if $f$ is differentiable at $x,$ then
all the sequential cord derivatives of $f$ at $x$ exist and are
equal to the derivative $f'\left(x\right)$ of $f$ at the point $x.$ 
\begin{prop}
If $f'\left(x\right)$ exists, $h_{n}\searrow0,$ and $k_{n}\searrow0,$
then 
\[
\frac{f\left(x+h_{n}\right)-f\left(x-k_{n}\right)}{h_{n}+k_{n}}\underset{n\to\infty}{\longrightarrow}f'\left(x\right).
\]
\end{prop}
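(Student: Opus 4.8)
The plan is to express the cord difference quotient as a convex combination of a right-hand and a left-hand Newton difference quotient, each of which converges to $f'\left(x\right)$ by hypothesis. First I would insert and subtract $f\left(x\right)$ in the numerator and factor each resulting term so as to expose the appropriate secant quotient. Using that $\tfrac{f\left(x\right)-f\left(x-k_{n}\right)}{k_{n}}=Df\left(x,-k_{n}\right)$, this yields
\[
\frac{f\left(x+h_{n}\right)-f\left(x-k_{n}\right)}{h_{n}+k_{n}}=\frac{h_{n}}{h_{n}+k_{n}}\,Df\left(x,h_{n}\right)+\frac{k_{n}}{h_{n}+k_{n}}\,Df\left(x,-k_{n}\right).
\]
Setting $\lambda_{n}:=\tfrac{h_{n}}{h_{n}+k_{n}}$, and recalling $h_{n},k_{n}>0$, the weights $\lambda_{n}$ and $1-\lambda_{n}=\tfrac{k_{n}}{h_{n}+k_{n}}$ are well defined (the denominator never vanishes), lie in $\left[0,1\right]$, and sum to $1$. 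Thus the cord quotient is genuinely a convex combination of $Df\left(x,h_{n}\right)$ and $Df\left(x,-k_{n}\right)$.

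Next I would invoke the characterization of differentiability recorded just after the definition of the sequential secant derivative: since $f'\left(x\right)$ exists and both $h_{n}\to0$ and $-k_{n}\to0$, we have $Df\left(x,h_{n}\right)\to f'\left(x\right)$ and $Df\left(x,-k_{n}\right)\to f'\left(x\right)$.

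To pass to the limit, fix $\varepsilon>0$ and choose $N$ so large that for every $n\geq N$ both $\left|Df\left(x,h_{n}\right)-f'\left(x\right)\right|<\varepsilon$ and $\left|Df\left(x,-k_{n}\right)-f'\left(x\right)\right|<\varepsilon$. Writing $f'\left(x\right)=\lambda_{n}f'\left(x\right)+\left(1-\lambda_{n}\right)f'\left(x\right)$ and applying the triangle inequality gives, for $n\geq N$,
\[
\left|\frac{f\left(x+h_{n}\right)-f\left(x-k_{n}\right)}{h_{n}+k_{n}}-f'\left(x\right)\right|\leq\lambda_{n}\varepsilon+\left(1-\lambda_{n}\right)\varepsilon=\varepsilon,
\]
which establishes the claimed convergence.

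The computation is entirely routine; the only point that actually does the work is the observation that the two weights lie in $\left[0,1\right]$ and sum to one. This is precisely what prevents the (possibly very disparate) magnitudes of $h_{n}$ and $k_{n}$ from amplifying the error: no matter how the ratio $h_{n}/k_{n}$ behaves, the convex-combination structure keeps the deviation bounded by $\varepsilon$. I therefore expect no genuine obstacle, since $f'\left(x\right)$ is a real number and both one-sided Newton quotients already converge to it; the same $\min$-bound would in fact handle the degenerate case of an infinite derivative as well.
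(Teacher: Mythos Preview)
Your proof is correct and follows essentially the same approach as the paper: you write the cord quotient as the convex combination $\lambda_{n}\,Df(x,h_{n})+(1-\lambda_{n})\,Df(x,-k_{n})$ with $\lambda_{n}=h_{n}/(h_{n}+k_{n})\in[0,1]$, then use an $\varepsilon$-argument and the triangle inequality exactly as the paper does. The paper likewise treats only the case where $f'(x)$ is finite and leaves $f'(x)=\pm\infty$ to the reader, so your closing remark about the infinite case (via the $\min$ of the two quotients) matches the paper's level of detail.
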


\begin{proof}
\textcolor{black}{Suppose $f'\left(x\right)$ is a real number. Let
$\varepsilon>0$ be fixed. Pick $n$ such that $\frac{f\left(x+h_{n}\right)-f\left(x\right)}{h_{n}}$
and $\frac{f\left(x-k_{n}\right)-f\left(x\right)}{-k_{n}}$ both are
within $\varepsilon$ of $f'\left(x\right).$ Since }
\begin{align*}
 & \frac{f\left(x+h_{n}\right)-f\left(x-k_{n}\right)}{h_{n}+k_{n}}\\
 & =\frac{h_{n}}{h_{n}+k_{n}}\frac{f\left(x+h_{n}\right)-f\left(x\right)}{h_{n}}+\frac{k_{n}}{h_{n}+k_{n}}\frac{f\left(x-k_{n}\right)-f\left(x\right)}{-k_{n}}.
\end{align*}
Combining this with $\frac{h_{n}}{h_{n}+k_{n}}\geq0,$ $\frac{k_{n}}{h_{n}+k_{n}}\geq0,$
and $\frac{h_{n}}{h_{n}+k_{n}}+\frac{k_{n}}{h_{n}+k_{n}}=1$ we see
that 
\begin{align*}
 & \left|\frac{f\left(x+h_{n}\right)-f\left(x-k_{n}\right)}{h_{n}+k_{n}}-f'\left(x\right)\right|\\
 & \leq\frac{h_{n}}{h_{n}+k_{n}}\left|\frac{f\left(x+h_{n}\right)-f\left(x\right)}{h_{n}}-f'\left(x\right)\right|+\frac{k_{n}}{h_{n}+k_{n}}\left|\frac{f\left(x-k_{n}\right)-f\left(x\right)}{-k_{n}}-f'\left(x\right)\right|\\
 & <\frac{h_{n}}{h_{n}+k_{n}}\varepsilon+\frac{k_{n}}{h_{n}+k_{n}}\varepsilon\\
 & =\varepsilon.
\end{align*}
The cases where $f'\left(x\right)=\pm\infty$ are left for the reader.
This completes the proof. 
\end{proof}
The following provides a converse to the previous result, when $f$
is assumed to be continuous at the point of interest. 
\begin{thm}
If $f$ is continuous at $x$ and all the cord derivatives of $f$
at $x$ exists and equals $L\in\overline{\mathbb{R}},$ then $f'\left(x\right)$
exists and equals $L.$ 
\end{thm}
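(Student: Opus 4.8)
The plan is to reduce the statement to a fact about sequential secant derivatives and then exploit the hypothesis that every cord derivative equals $L$ by realizing each secant difference quotient as a limit of cord difference quotients.

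First I would record the following reduction: it suffices to prove that every sequential secant derivative of $f$ at $x$ — whether right-hand or left-hand — equals $L$. Indeed, suppose this is known. Given any sequence $t_n\to0$ with $t_n\neq0$ and $x+t_n\in D$, the quotients $\frac{f(x+t_n)-f(x)}{t_n}$ lie in the compact space $\overline{\mathbb{R}}$, so if they did not converge to $L$ some subsequence would converge to a point $L'\neq L$; passing if necessary to an all-positive or all-negative sub-subsequence (one sign occurs infinitely often) exhibits $L'$ as a right-hand or left-hand secant derivative, contradicting the assumption. Hence the difference quotient converges to $L$ along every sequence, which is exactly the statement that $f'(x)$ exists and equals $L$.

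Next I would show every right-hand secant derivative equals $L$; the left-hand case is symmetric. So fix $h_n\searrow0$ with $\frac{f(x+h_n)-f(x)}{h_n}\to S\in\overline{\mathbb{R}}$, and build an auxiliary sequence $k_n\searrow0$ so small relative to $h_n$ that the cord quotient $\frac{f(x+h_n)-f(x-k_n)}{h_n+k_n}$ has the same limit $S$; since every cord derivative equals $L$, this forces $S=L$. When $S$ is finite I would use continuity of $f$ at $x$ to choose $k_n$ with $k_n<h_n/n$ and $\lvert f(x-k_n)-f(x)\rvert<h_n^2$. Writing the cord quotient as $\frac{h_n}{h_n+k_n}\cdot\frac{f(x+h_n)-f(x)}{h_n}+\frac{f(x)-f(x-k_n)}{h_n+k_n}$, the first term tends to $S$ because $k_n/h_n\to0$, while the second is bounded in absolute value by $h_n^2/h_n=h_n\to0$; hence the cord quotient tends to $S$.

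The main obstacle is the infinite case, where continuity forces $f(x+h_n)-f(x)\to0$ even while its quotient by $h_n$ blows up, so a crude bound such as $\lvert f(x)-f(x-k_n)\rvert<1$ only yields an indeterminate comparison at the relevant scale. To get around this I would choose $k_n$, again by continuity and for $n$ large enough that $\delta_n:=f(x+h_n)-f(x)>0$, so that $k_n\le h_n$, $k_n\le 1/n$, and the perturbation is small relative to $\delta_n$ itself, namely $\lvert f(x)-f(x-k_n)\rvert<\tfrac12\delta_n$. Then the cord numerator is at least $\tfrac12\delta_n$ while the denominator is at most $2h_n$, so the cord quotient is at least $\tfrac14\cdot\frac{\delta_n}{h_n}\to+\infty=S$, giving $S=L$; the case $S=-\infty$ is identical with the signs reversed. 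Carrying out the mirror-image construction, taking $h_n$ negligible relative to $k_n$, handles left-hand secant derivatives, and the reduction in the first paragraph then completes the proof.
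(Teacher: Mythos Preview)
Your argument is correct, and the central idea---use continuity to manufacture a companion sequence $k_n$ that is negligible against $h_n$, so that the cord quotient and the secant quotient have the same limit---is exactly what the paper does. Two remarks on execution.

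First, your separate treatment of $S=\pm\infty$ is unnecessary even within your own framework. With the same choice $k_n<h_n/n$ and $\lvert f(x-k_n)-f(x)\rvert<h_n^{2}$, your decomposition
\[
\frac{f(x+h_n)-f(x-k_n)}{h_n+k_n}=\frac{h_n}{h_n+k_n}\cdot\frac{f(x+h_n)-f(x)}{h_n}+\frac{f(x)-f(x-k_n)}{h_n+k_n}
\]
already gives a factor tending to $1$, a factor tending to $S$, and an additive term bounded by $h_n\to0$; the product of a sequence tending to $1$ with one tending to $+\infty$ tends to $+\infty$, and adding a null sequence does not change that. So the ``obstacle'' you flag is not really there.

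Second, the paper sidesteps your preliminary compactness/subsequence reduction entirely by solving the same algebraic identity in the other direction:
\[
\frac{f(x+h_n)-f(x)}{h_n}=\frac{h_n+k_n}{h_n}\cdot\frac{f(x+h_n)-f(x-k_n)}{h_n+k_n}+\frac{f(x-k_n)-f(x)}{h_n}.
\]
Here the right-hand side converges to $1\cdot L+0=L$ for \emph{every} $h_n\searrow0$ (the middle factor is a cord quotient, hence tends to $L$ by hypothesis), so the secant quotient is shown to converge to $L$ directly, with no need to first extract a convergent subsequence or to split on whether $L$ is finite. Your route and the paper's are the same in spirit; the paper's rearrangement just buys a shorter, case-free proof.
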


\begin{proof}
By assumption 
\[
\frac{f\left(x+h_{n}\right)-f\left(x-k_{n}\right)}{h_{n}+k_{n}}\underset{n\to\infty}{\longrightarrow}L
\]
for all $h_{n}\searrow0,$ and $k_{n}\searrow0.$ Suppose $h_{n}\searrow0.$
Since $f$ is continuous at $x$ we can pick $k_{n}\searrow0$ such
that $k_{n}\leq h_{n}^{2}$ and $\left|f\left(x-k_{n}\right)-f\left(x\right)\right|\leq h_{n}^{2}.$
Using 
\begin{align*}
\frac{f\left(x+h_{n}\right)-f\left(x\right)}{h_{n}} & =\frac{h_{n}+k_{n}}{h_{n}}\cdot\frac{f\left(x+h_{n}\right)-f\left(x-k_{n}\right)}{h_{n}+k_{n}}+\frac{f\left(x-k_{n}\right)-f\left(x\right)}{h_{n}}
\end{align*}
we conclude, $\frac{f\left(x+h_{n}\right)-f\left(x\right)}{h_{n}}\to L.$
Hence, the right hand derivative of $f$ at $x$ exists and equals
$L.$ Similarly, the left hand derivative of $f$ at $x$ exists and
equals $L.$ 
\end{proof}
Our next result shows that the set of sequential cord derivatives
at a point is a closed. It is the analog of Theorem \ref{thm:Seq-a-closed-set}
for cord derivatives.
\begin{thm}
\label{thm:Cord-derivatives-closed}The set of sequential cord derivatives
at $x$ is a closed subset of $\overline{\mathbb{R}}.$ 
\end{thm}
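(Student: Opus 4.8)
The plan is to mirror the diagonalization argument already used in the proof of Theorem \ref{thm:Seq-a-closed-set}, adapting it to the two-sequence setting. Suppose $L_n \in \overline{\mathbb{R}}$ are cord derivatives of $f$ at $x$ and $L_n \to L$; I must produce sequences $h_m \searrow 0$ and $k_m \searrow 0$ whose cord quotient converges to $L$. The essential idea is the same: for each $n$ the witnessing quotient can be made as close to $L_n$ as we like by going far enough out along the defining sequences, and $L_n$ is itself as close to $L$ as we like by taking $n$ large; a diagonal selection then produces a single pair of sequences witnessing $L$.

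First I would set up the witnesses. For each $n$, since $L_n$ is a cord derivative, fix sequences $h_{n,m} \searrow 0$ and $k_{n,m} \searrow 0$ (as $m \to \infty$) with
\[
\frac{f\left(x+h_{n,m}\right)-f\left(x-k_{n,m}\right)}{h_{n,m}+k_{n,m}} \underset{m\to\infty}{\longrightarrow} L_n.
\]
Then pick an index $N_n$ large enough that this quotient is within $\tfrac{1}{n}$ of $L_n$, and additionally large enough that both $h_{n,N_n} < \tfrac{1}{n}$ and $k_{n,N_n} < \tfrac{1}{n}$. Setting $h_n := h_{n,N_n}$ and $k_n := k_{n,N_n}$ gives $h_n \searrow 0$ and $k_n \searrow 0$ (the smallness conditions force the sequences to zero), while the cord quotient formed from $h_n,k_n$ converges to $L$. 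For the finite case this is immediate from the triangle inequality $\left|Q_n - L\right| \le \left|Q_n - L_n\right| + \left|L_n - L\right|$, both terms tending to zero.

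The step I expect to be the main obstacle is the case $L = \pm\infty$, where the triangle-inequality estimate is unavailable and one must argue directly in the extended reals. Here I would handle it by the same selection but phrased through the order structure: given $L = +\infty$, for each $n$ choose $N_n$ so that the quotient exceeds $\min(L_n, n) - 1$, say, which is legitimate because the quotient converges to $L_n$ and $L_n \to +\infty$; then the resulting cord quotients diverge to $+\infty$. One subtlety worth flagging is that the definition of closedness in Definition \ref{def:closed-dense-isolated-interval}(\emph{a}) only requires testing against sequences of \emph{real} $L_n \in S \cap \mathbb{R}$, so strictly speaking the $L_n$ may be assumed finite and only the target $L$ ranges over $\overline{\mathbb{R}}$; this actually simplifies matters, since the $\left|L_n - L\right|$ term in the finite-target case is a genuine real distance and the infinite-target case is covered by the order argument above. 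I would close by noting that $L$ is therefore a cord derivative, so the set is closed.
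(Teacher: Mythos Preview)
Your proposal is correct and follows essentially the same diagonalization argument as the paper's own proof. If anything you are slightly more careful: the paper selects $N_n$ only to make the quotient within $\tfrac{1}{n}$ of $L_n$ and does not explicitly force $h_{n,N_n},k_{n,N_n}\to 0$, nor does it separately address the case $L=\pm\infty$, both of which you handle.
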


\begin{proof}
Suppose $L_{n}\in\mathbb{R}$ is a sequence of cord derivatives at
$x$ and $L_{n}\to L$ as $n\to\infty.$ We must show $L$ is a cord
derivative. For each $n,$ let $h_{n,m}\searrow0$ and $k_{n,m}\searrow0$
be such that 
\[
\frac{f\left(x+h_{n,m}\right)-f\left(x-k_{n,m}\right)}{h_{n,m}+k_{n,m}}\underset{m\to\infty}{\longrightarrow}L_{n}.
\]
Pick $N_{n}$ such that 
\[
\left|\frac{f\left(x+h_{n,N_{n}}\right)-f\left(x-k_{n,N_{n}}\right)}{h_{n,N_{n}}+k_{n,N_{n}}}-L_{n}\right|<\frac{1}{n}
\]
then 
\[
\frac{f\left(x+h_{n,N_{n}}\right)-f\left(x-k_{n,N_{n}}\right)}{h_{n,N_{n}}+k_{n,N_{n}}}\underset{n\to\infty}{\longrightarrow}L.
\]
This completes the proof. 
\end{proof}
In Theorem \ref{sec-1-thm:continuous} we showed that the set of one-sided
secant derivatives of a continuous function is a closed interval.
Our next result establishes an appropriate version of this for cord
derivatives. 
\begin{thm}
\label{sec-2-thm:f-is-continuous}If $f:\left[-1,1\right]\to\mathbb{R}$
is continuous, then the set of sequential cord derivatives of $f$
at $0$ is either the empty set, a single point, or a closed interval
in $\overline{\mathbb{R}}$. 
\end{thm}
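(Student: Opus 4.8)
The plan is to combine the closedness already established in Theorem~\ref{thm:Cord-derivatives-closed} with an intermediate value property, exactly as the proof of Theorem~\ref{sec-1-thm:continuous} combined closedness with the one-variable Intermediate Value Theorem. Writing $S$ for the set of cord derivatives of $f$ at $0$, the key reduction is: a subset of $\overline{\mathbb{R}}$ that is closed in the sense of Definition~\ref{def:closed-dense-isolated-interval} and has the property that $L,M\in S$ with $L<M$ forces every $K$ with $L<K<M$ into $S$ must be empty, a singleton, or a closed interval. Indeed, if $S$ has at least two points, set $a=\inf S$ and $b=\sup S$ in $\overline{\mathbb{R}}$, so $a<b$; the intermediate value property puts every (necessarily finite) point of the open interval $(a,b)$ into $S$, and closedness then forces $a$ and $b$ into $S$ as well, using sequences in $(a,b)\subseteq S$ tending to each endpoint, whether finite or infinite. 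Hence $S=[a,b]$, a closed interval.

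So the substance is the intermediate value property. First I would fix cord derivatives $L<M$ and a finite $K$ with $L<K<M$, choose defining sequences $a_n,b_n\searrow0$ with $\tfrac{f(a_n)-f(-b_n)}{a_n+b_n}\to L$ and $c_n,d_n\searrow0$ with $\tfrac{f(c_n)-f(-d_n)}{c_n+d_n}\to M$, and introduce the two-variable function $g(s,t):=\tfrac{f(s)-f(-t)}{s+t}$, which is continuous on $(0,1]\times(0,1]$ because $f$ is continuous and the denominator does not vanish there. The new feature compared with the secant case is that the parameter now lives in a two-dimensional region, and I would exploit the convexity of that region rather than order the sample points along a single line. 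For each $n$ consider the straight segment $\gamma_n(\lambda)=(1-\lambda)(a_n,b_n)+\lambda(c_n,d_n)$ for $\lambda\in[0,1]$; it stays in the open first quadrant, and each of its coordinates lies between the corresponding coordinates of its endpoints, hence inside a box shrinking to the origin.

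Next I would apply the ordinary Intermediate Value Theorem to the continuous scalar function $\lambda\mapsto g(\gamma_n(\lambda))$. Its values at $\lambda=0$ and $\lambda=1$ are $\tfrac{f(a_n)-f(-b_n)}{a_n+b_n}$ and $\tfrac{f(c_n)-f(-d_n)}{c_n+d_n}$, which converge to $L$ and $M$; hence for all large $n$ the first is below $K$ and the second above $K$, so there is $\lambda_n\in(0,1)$ with $g(\gamma_n(\lambda_n))=K$. Setting $(p_n,q_n):=\gamma_n(\lambda_n)$ gives $p_n,q_n>0$ with $p_n,q_n\to0$, since they sit in the shrinking box, together with $\tfrac{f(p_n)-f(-q_n)}{p_n+q_n}=K$ for all large $n$. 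Thus $K\in S$, which establishes the intermediate value property and completes the proof via the reduction above.

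I expect the main obstacle to be bookkeeping rather than conceptual: verifying that the interpolated points $p_n,q_n$ genuinely tend to $0$, which follows because all four of $a_n,b_n,c_n,d_n$ do and each coordinate is a convex combination, and checking that the argument survives when $L=-\infty$ or $M=+\infty$. The latter causes no trouble, since for each fixed $n$ the endpoint values of $g\circ\gamma_n$ are finite real numbers and only the finite target $K$ must be hit; one merely notes that a quotient tending to $+\infty$ eventually exceeds $K$, and similarly on the other side. Finally, I would remark that since $0$ is interior to $[-1,1]$ one may always take $h_n=k_n\to0$ and extract a convergent subsequence of the resulting quotients in the compact space $\overline{\mathbb{R}}$, so in this setting $S$ is in fact never empty, leaving only the singleton and closed-interval alternatives.
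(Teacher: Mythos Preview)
Your proof is correct and follows essentially the same route as the paper: both interpolate along the straight segment in the $(h,k)$-plane joining the two sample pairs and apply the one-variable Intermediate Value Theorem to the resulting continuous function of the interpolation parameter. Your write-up is more explicit than the paper's about the closedness-plus-IVP reduction and the handling of infinite endpoints, and your closing observation that $S$ is never empty (via compactness of $\overline{\mathbb{R}}$) is a valid point the paper does not make.
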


\begin{proof}
Replacing $f$ by $f\left(x\right)-f\left(0\right),$ if necessary,
we may assume $f\left(0\right)=0.$ Suppose $L^{-}<L^{+}$ are cord
derivatives of $f$ at $0.$ Let $L^{-}<K<L^{+}.$ We must show $K$
is a sequential cord derivative of $f$ at $0.$ Suppose $h_{n}^{\pm}\searrow0,$
$k_{n}^{\pm}\searrow0,$ $\tfrac{f\left(h_{n}^{\alpha}\right)-f\left(-k_{n}^{\alpha}\right)}{h_{n}^{\alpha}+k_{n}^{\alpha}}\to L^{\alpha},$
for $\alpha=\pm.$ Hence for sufficiently large $n$
\[
\tfrac{f\left(h_{n}^{-}\right)-f\left(-k_{n}^{-}\right)}{h_{n}^{-}+k_{n}^{-}}<K<\tfrac{f\left(h_{n}^{+}\right)-f\left(-k_{n}^{+}\right)}{h_{n}^{+}+k_{n}^{+}}.
\]
Let 
\[
\phi_{n}\left(t\right)=\tfrac{f\left(th_{n}^{-}+\left(1-t\right)h_{n}^{+}\right)-f\left(-tk_{n}^{-}-\left(1-t\right)k_{n}^{+}\right)}{th_{n}^{-}+\left(1-t\right)h_{n}^{+}+tk_{n}^{-}+\left(1-t\right)k_{n}^{+}}.
\]
The $\phi_{n}$ is continuous on $\left[0,1\right],$ $\phi_{n}\left(0\right)=\tfrac{f\left(h_{n}^{+}\right)-f\left(-k_{n}^{+}\right)}{h_{n}^{+}+k_{n}^{+}}$
and $\phi_{n}\left(1\right)=\tfrac{f\left(h_{n}^{-}\right)-f\left(-k_{n}^{-}\right)}{h_{n}^{-}+k_{n}^{-}}.$
It follows from the Intermediate Value Theorem, that $\phi_{n}\left(t_{n}\right)=K$
for some $t_{n}$ between $0$ and $1.$ Setting $h_{n}'=t_{n}h_{n}^{-}+\left(1-t_{n}\right)h_{n}^{+}$
and $k_{n}'=tk_{nn}^{-}+\left(1-t_{n}\right)k_{n}^{+}$ it follows
that $h_{n}'\searrow0,$$k_{n}'\searrow0$ and $\frac{f\left(x+h_{n}\right)-f\left(x-k_{n}\right)}{h_{n}+k_{n}}=K$
for all sufficiently large $n.$ This completes the proof. 
\end{proof}

\section{Interactions Between Cord and Secant Derivatives\label{sec:Interactions-Between-Cord}}

In this section we establish some relationships between the cord and
secant derivatives. Our first result gives a condition under which
the set of secant derivatives at a point $x$ is a subset of the set
of cord derivatives at $x.$ We show that it may be a proper subset
and apply the inclusion to the Weierstrass function. 
\begin{thm}
\label{sec-3-THM:secant-subset-cord}If $f$ is continuous at $x,$
then the set of secant derivatives of $f$ at $x$ is a subset of
the set of cord derivatives of $f$ at $x$.
\end{thm}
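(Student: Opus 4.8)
The plan is to show that any secant derivative $L$ of $f$ at $x$ can be realized as a cord derivative by building cord sequences out of the secant sequence. Suppose $L\in\overline{\mathbb{R}}$ is a secant derivative, so there is a sequence $h_n\neq 0$ with $h_n\to 0$ and $Df(x,h_n)\to L$. The key idea is that a secant is just a degenerate cord, where one of the two endpoints coincides with $x$ itself. So the first natural attempt is to set one of the increments equal to zero. But the definition of cord derivative requires $h_n\searrow 0$ AND $k_n\searrow 0$ with both strictly positive, so I cannot literally take $k_n=0$.

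Let me think carefully. The secant sequence $h_n$ need not be positive, so I should split into cases. First suppose $L$ is a right hand secant derivative, so $h_n\searrow 0$. I would like to take the cord with $x+h_n$ on the right and $x-k_n$ on the left, and arrange for the left contribution to be negligible. Concretely, set the right increment to be $h_n$ and choose $k_n\searrow 0$ extremely small—small enough that the difference quotient is dominated by the $h_n$ term. Writing
\[
\frac{f(x+h_n)-f(x-k_n)}{h_n+k_n}=\frac{h_n}{h_n+k_n}\,Df(x,h_n)+\frac{k_n}{h_n+k_n}\,\frac{f(x-k_n)-f(x)}{-k_n}\cdot(-1)\cdot\frac{-k_n}{k_n},
\]
it becomes clear that if I can force $\tfrac{k_n}{h_n+k_n}\to 0$ and control $f(x-k_n)-f(x)$, the quotient tends to $L$.

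The cleanest route uses continuity of $f$ at $x$. Given $h_n\searrow 0$, I would choose $k_n\searrow 0$ so small that both $k_n\le h_n^2$ and $|f(x-k_n)-f(x)|\le h_n^2$; such a choice is possible precisely because $f$ is continuous at $x$. Then I decompose
\[
\frac{f(x+h_n)-f(x-k_n)}{h_n+k_n}=\frac{h_n}{h_n+k_n}\,Df(x,h_n)+\frac{f(x)-f(x-k_n)}{h_n+k_n}.
\]
The second term is bounded in absolute value by $\tfrac{h_n^2}{h_n}=h_n\to 0$, and the coefficient $\tfrac{h_n}{h_n+k_n}\to 1$ since $k_n/h_n\le h_n\to 0$. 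Hence the quotient converges to $L$ when $L$ is finite. For $L=\pm\infty$ the same decomposition shows the quotient diverges to $\pm\infty$, since $Df(x,h_n)\to\pm\infty$ while the coefficient stays near $1$ and the additive correction stays bounded by $h_n$.

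For a left hand secant derivative ($h_n\nearrow 0$), I run the symmetric construction: put $x-k_n$ at $x+h_n$ (i.e. use $k_n=-h_n\searrow 0$ as the genuine left increment) and pick a negligible right increment $h_n'\searrow 0$ with $h_n'\le k_n^2$ and $|f(x+h_n')-f(x)|\le k_n^2$, then argue identically. Since every secant derivative is either a right hand or left hand one, this covers all cases. \textbf{The main obstacle} I anticipate is purely the bookkeeping in the $L=\pm\infty$ case: one must check that the vanishing additive term and the coefficient tending to $1$ genuinely do not interfere with divergence, rather than producing an indeterminate form. This is handled by noting the correction term is $O(h_n)$ while $\tfrac{h_n}{h_n+k_n}Df(x,h_n)$ has the same sign as and magnitude comparable to $Df(x,h_n)$ for large $n$.
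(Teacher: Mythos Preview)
Your proposal is correct and follows essentially the same approach as the paper: choose $k_n\searrow 0$ with $k_n\le h_n^2$ and $|f(x-k_n)-f(x)|\le h_n^2$ using continuity, then use an additive decomposition relating the cord quotient to $Df(x,h_n)$ plus a vanishing correction. The only cosmetic difference is that the paper writes the identity as $Df(x,h_n)=\tfrac{h_n+k_n}{h_n}\cdot(\text{cord quotient})+\tfrac{f(x-k_n)-f(x)}{h_n}$ and solves for the cord quotient, whereas you decompose the cord quotient directly; your explicit treatment of the $L=\pm\infty$ case is a small bonus.
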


\begin{proof}
Let $L$ be a right hand secant derivative of $f.$ By assumption
\[
\frac{f\left(x+h_{n}\right)-f\left(x\right)}{h_{n}}\underset{n\to\infty}{\longrightarrow}L
\]
for some $h_{n}\searrow0.$ Since $f$ is continuous at $x$ we can
pick $k_{n}\searrow0$ such that $k_{n}\leq h_{n}^{2}$ and $\left|f\left(x-k_{n}\right)-f\left(x\right)\right|\leq h_{n}^{2}.$
By the choice of $k_{n}$ we have $\frac{h_{n}+k_{n}}{h_{n}}\to1$
and $\frac{f\left(x-k_{n}\right)-f\left(x\right)}{h_{n}}\to0.$ Hence,
using 
\begin{align*}
\frac{f\left(x+h_{n}\right)-f\left(x\right)}{h_{n}} & =\frac{h_{n}+k_{n}}{h_{n}}\cdot\frac{f\left(x+h_{n}\right)-f\left(x-k_{n}\right)}{h_{n}+k_{n}}+\frac{f\left(x-k_{n}\right)-f\left(x\right)}{h_{n}}
\end{align*}
we conclude, $\frac{f\left(x+h_{n}\right)-f\left(x-k_{n}\right)}{h_{n}+k_{n}}\to L.$
Hence, a cord derivative of $f$ at $x$ exists and equals $L.$ 

The case where $L$ is a left hand secant derivative of $f$ is similar.
\end{proof}
\begin{example}
The Weierstrass function revisited. Weierstrass showed that $\pm\infty$
are secant derivatives of $W$ at any point $x.$ By Theorem \ref{sec-3-THM:secant-subset-cord}
$\pm\infty$ are also cord derivatives of $W$ at any point $x.$
Since $W$ is continuous on the real line, it follows from Theorem
\ref{sec-2-thm:f-is-continuous} that at any point $x,$ the set of
cord derivatives of $W$ equals the extended real line $\overline{\mathbb{R}}.$
\end{example}

In light of Theorem \ref{sec-3-THM:secant-subset-cord} a natural
question is: Can the set of secant derivatives be a proper subset
of the set of cord derivates? By considering simple examples it is
easy to see that the answer is yes. A simple example is provided by
considering $f\left(x\right)=\left|x\right|$ another example is provided
in Example \ref{sec-3-exa:intervals}. 
\begin{example}
If $f\left(x\right):=\left|x\right|$ and $-1\leq L\leq1,$ then there
exists $h_{n}\searrow0,$ and $k_{n}\searrow0,$ such that 
\[
\frac{f\left(h_{n}\right)-f\left(-k_{n}\right)}{h_{n}+k_{n}}\underset{n\to\infty}{\longrightarrow}L.
\]
\end{example}

\begin{proof}
This follows from Theorem \ref{sec-3-thm:poly}. We provide a simple
direct proof, the proof introduces some ideas used below. 

(\emph{a}) If $L=0,$ let $h_{n}=k_{n}=\frac{1}{n}.$ 

(\emph{b}) Suppose $0<L<1.$ Consider $h_{n}=\frac{1}{n}$ and $k_{n}=\frac{b}{n},$
then 
\[
\frac{\frac{1}{n}-\frac{b}{n}}{\frac{1}{n}+\frac{b}{n}}=\frac{1-b}{1+b}=L.
\]
Solving for $b$ we see  $b=\left(1-b\right)/\left(1+L\right)$ does
the job. 

(\emph{c}) If $L=1$ setting $h_{n}=\frac{1}{n}$ and $k_{n}=\frac{1}{n^{2}}$
does the job, since 
\[
\frac{\frac{1}{n}-\frac{1}{n^{2}}}{\frac{1}{n}+\frac{1}{n^{2}}}=\frac{1-\frac{1}{n}}{1+\frac{1}{n}}\to1.
\]
This also follows from Proposition \ref{sec-3-THM:secant-subset-cord}.

(\emph{d}) We leave the cases $-1\leq L<0$ for the reader. 
\end{proof}

Below we calculate the set of cord derivatives assuming the right
hand and left hand secant derivatives exists. To simplify the notation
we assume the point of interest is $x=0$ and $f\left(0\right)=0.$
We can always arrange this by considering $g\left(t\right)=f\left(t+x\right)-f\left(x\right)$
in place of $f.$ Suppose $h_{n}\searrow0$ and $k_{n}\searrow0.$
The basis for our calculations is the formula 
\begin{equation}
\frac{f\left(h_{n}\right)-f\left(-k_{n}\right)}{h_{n}+k_{n}}=\frac{h_{n}}{h_{n}+k_{n}}\cdot\frac{f\left(h_{n}\right)}{h_{n}}+\frac{k_{n}}{h_{n}+k_{n}}\cdot\frac{f\left(-k_{n}\right)}{-k_{n}}.\label{sec-3-eq:Basic-Calculation}
\end{equation}
Suppose 
\begin{equation}
\frac{h_{i_{n}}}{h_{i_{n}}+k_{i_{n}}}\to r,\text{}\frac{f\left(h_{n}\right)}{h_{n}}\to R\text{ and }\frac{f\left(-k_{n}\right)}{-k_{n}}\to L\text{ as }n\to\infty\label{sec-3-eq:Basic-Assumptions}
\end{equation}
where $R$ and $L$ are real numbers and $h_{i_{n}}$ is a subsequence
of $h_{n}$ and $k_{j_{n}}$ is a subsequence of $k_{n},$ then $0\leq r\leq1$
and 
\begin{equation}
\frac{f\left(h_{i_{n}}\right)-f\left(-k_{j_{n}}\right)}{h_{i_{n}}+k_{j_{n}}}\to rR+\left(1-r\right)L\text{ as }n\to\infty.\label{sec-3-eq:Basic-Conclusion}
\end{equation}
Where Equation (\ref{sec-3-eq:Basic-Conclusion}) follows from (\ref{sec-3-eq:Basic-Assumptions})
by replacing the sequences $h_{n}$ and $k_{n}$ by the appropriate
subsequences in (\ref{sec-3-eq:Basic-Calculation}). 

In particular, 
\begin{prop}
\label{sec-3-prop:Cord-between-Secants}If $h_{n},k_{n}\searrow0,$
$f$ is defined on the set $\left\{ 0,h_{1},-k_{1},h_{2},-k_{2},\ldots\right\} ,$
$L,R$ are real numbers, and
\[
\frac{f\left(h_{n}\right)}{h_{n}}\to R\text{ and }\frac{f\left(-k_{n}\right)}{-k_{n}}\to L,
\]
then any cord derivative of $f$ at $0$ is a real number between
$R$ and $L.$ 
\end{prop}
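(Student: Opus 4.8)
The plan is to reduce the proposition to the convex-combination identity (\ref{sec-3-eq:Basic-Calculation}) together with the elementary fact that a convex combination of two convergent real sequences stays trapped between their limits. First I would unwind what a cord derivative of $f$ at $0$ can be, given the restricted domain $D=\left\{0,h_{1},-k_{1},h_{2},-k_{2},\ldots\right\}$. The only positive points of $D$ are the $h_{j}$ and the only negative points are the $-k_{j}$, so any sequences witnessing a cord derivative are forced to be of the form $H_{n}=h_{i_{n}}\searrow0$ and $K_{n}=k_{j_{n}}\searrow0$. Because $h_{j}$ and $k_{j}$ decrease to $0$, the requirements $H_{n}\to0$ and $K_{n}\to0$ force $i_{n}\to\infty$ and $j_{n}\to\infty$ (even though these index selections need not be monotone); consequently $\tfrac{f(h_{i_{n}})}{h_{i_{n}}}\to R$ and $\tfrac{f(-k_{j_{n}})}{-k_{j_{n}}}\to L$, since each is a selection from the tail of a sequence already assumed to converge.

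Next I would apply (\ref{sec-3-eq:Basic-Calculation}) to the selected sequences to write each cord quotient as
\[
\frac{f(h_{i_{n}})-f(-k_{j_{n}})}{h_{i_{n}}+k_{j_{n}}}=r_{n}\cdot\frac{f(h_{i_{n}})}{h_{i_{n}}}+(1-r_{n})\cdot\frac{f(-k_{j_{n}})}{-k_{j_{n}}},\qquad r_{n}:=\frac{h_{i_{n}}}{h_{i_{n}}+k_{j_{n}}}\in[0,1].
\]
This exhibits every cord quotient as a convex combination of a term near $R$ and a term near $L$. Fixing $\varepsilon>0$, for all large $n$ both factors lie in the interval $\left(\min(R,L)-\varepsilon,\ \max(R,L)+\varepsilon\right)$, and a convex combination of two numbers in an interval stays in that interval. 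Hence the cord quotient lies in $\left[\min(R,L)-\varepsilon,\ \max(R,L)+\varepsilon\right]$ for all large $n$. In particular the cord quotient is bounded, so its limit $M$ must be a real number rather than $\pm\infty$, and letting $\varepsilon\to0$ yields $\min(R,L)\le M\le\max(R,L)$; that is, $M$ is a real number between $R$ and $L$, as claimed.

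The one point that needs care — and the only place where anything beyond the algebraic identity enters — is that the weights $r_{n}$ need not converge. I sidestep this with the interval-trapping estimate above, which requires no limit for $r_{n}$; alternatively one could invoke Bolzano--Weierstrass to extract a subsequence along which $r_{n}\to r\in[0,1]$ and then read off $M=rR+(1-r)L$ directly from (\ref{sec-3-eq:Basic-Calculation}). Either route simultaneously rules out the a priori possibility $M=\pm\infty$, since a convex combination of two bounded quantities cannot escape to infinity. I expect the main conceptual obstacle to be not the estimate itself but the bookkeeping of the opening paragraph: correctly arguing that an arbitrary cord derivative, defined through possibly unrelated sequences $H_{n},K_{n}$, is in fact governed by selections of the prescribed $h_{j}$ and $k_{j}$, so that the hypotheses $\tfrac{f(h_{n})}{h_{n}}\to R$ and $\tfrac{f(-k_{n})}{-k_{n}}\to L$ can be brought to bear.
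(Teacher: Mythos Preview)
Your argument is correct and follows the same route as the paper: the proposition is stated there as an immediate consequence (``In particular'') of the convex-combination identity (\ref{sec-3-eq:Basic-Calculation}) and the observation (\ref{sec-3-eq:Basic-Conclusion}) that the limit has the form $rR+(1-r)L$ with $r\in[0,1]$. If anything, you are more careful than the paper on two points: you spell out why an arbitrary cord derivative on this domain must arise from selections of the given $h_{j},k_{j}$ with indices tending to infinity, and you handle the fact that the weights $r_{n}$ need not converge (via interval trapping, or equivalently Bolzano--Weierstrass), whereas the paper tacitly assumes $r_{n}\to r$.

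One small wording slip: in this paper $h_{n}\searrow0$ is defined merely as $h_{n}>0$ and $h_{n}\to0$, not monotone decrease. Your conclusion $i_{n}\to\infty$ is still valid without monotonicity (a bounded subsequence of $i_{n}$ would trap $H_{n}$ in a finite set of positive values), so the argument stands unchanged.
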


Below we explore the converse of this statement. When $h_{n}$ and
$k_{n}$ decay at the same polynomial rate, then any real number between
$R$ and $L$ is a cord derivative. When $h_{n}$ and $k_{n}$ decay
at the same exponential rate, then the only accumulation points of
the set of cord derivatives are $R$ and $L,$ in particular, the
set of cord derivatives is not an interval.
\begin{thm}
\label{sec-3-thm:poly}Let $a,b,m>0$ be real numbers. Suppose $p\left(n\right),q\left(n\right)$
are increasing functions and 
\[
\frac{p\left(n\right)}{n^{m}}\to a\text{ and }\frac{q\left(n\right)}{n^{m}}\to b.
\]
Let $h_{n}:=\tfrac{1}{p\left(n\right)}$ and $k_{n}:=\tfrac{1}{q\left(n\right)}.$
Let $f$ be a function defined on $\left\{ 0,h_{1},-k_{1},h_{2},-k_{2},\ldots\right\} $
and let $R,L$ be real numbers. If $f\left(0\right)=0$ and 
\[
\frac{f\left(h_{n}\right)}{h_{n}}\to R\text{ and }\frac{f\left(-k_{n}\right)}{-k_{n}}\to L
\]
then every real number between $L$ and $R$ is a cord derivative
of $f$ at $0.$ 
\end{thm}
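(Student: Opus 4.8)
The plan is to reduce everything to the computation already recorded in (\ref{sec-3-eq:Basic-Conclusion}). That identity says that once we pass to subsequences $h_{i_{n}}$ of $h_{n}$ and $k_{j_{n}}$ of $k_{n}$ for which the weight $\frac{h_{i_{n}}}{h_{i_{n}}+k_{j_{n}}}$ converges to some $r\in[0,1]$, the corresponding cord quotient converges to $rR+(1-r)L$; here the hypotheses $\frac{f(h_{n})}{h_{n}}\to R$ and $\frac{f(-k_{n})}{-k_{n}}\to L$ pass to subsequences for free. Thus, fixing a target value $K$ strictly between $L$ and $R$ and writing $r:=\frac{K-L}{R-L}\in(0,1)$ in the case $R\neq L$, it suffices to produce subsequences along which the weight tends to exactly this $r$; then $rR+(1-r)L=K$, so $K$ is a cord derivative. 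If $R=L$, the only value ``between'' them is $R$ itself, realized by taking $i_{n}=j_{n}=n$.

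So the real work is to understand which limiting values the weight $\frac{h_{i_{n}}}{h_{i_{n}}+k_{j_{n}}}$ can attain. Since $h_{i}=1/p(i)$ and $k_{j}=1/q(j)$, I would rewrite
\[
\frac{h_{i}}{h_{i}+k_{j}}=\frac{q(j)}{p(i)+q(j)}=\frac{1}{1+p(i)/q(j)},\qquad\frac{p(i)}{q(j)}=\frac{p(i)/i^{m}}{q(j)/j^{m}}\left(\frac{i}{j}\right)^{m}.
\]
If the indices are chosen with $j_{n}/i_{n}\to\rho$ for a fixed $\rho>0$, then using $p(i_{n})/i_{n}^{m}\to a$ and $q(j_{n})/j_{n}^{m}\to b$ I obtain $p(i_{n})/q(j_{n})\to\frac{a}{b}\rho^{-m}$, whence the weight converges to $\frac{b\rho^{m}}{b\rho^{m}+a}$. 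The map $\rho\mapsto\frac{b\rho^{m}}{b\rho^{m}+a}$ is a continuous increasing bijection of $(0,\infty)$ onto $(0,1)$, so for the prescribed $r\in(0,1)$ I can solve it explicitly as $\rho=\bigl(\tfrac{ar}{b(1-r)}\bigr)^{1/m}$.

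It then remains to exhibit concrete index sequences. I would take integers $i_{n},j_{n}\to\infty$ with $j_{n}/i_{n}\to\rho$, for instance $i_{n}=n$ and $j_{n}$ the nearest integer to $\rho n$. Because the abbreviation $\searrow0$ demands only positivity and convergence to $0$, not monotonicity, the sequences $h_{i_{n}}$ and $k_{j_{n}}$ automatically satisfy $h_{i_{n}}\searrow0$ and $k_{j_{n}}\searrow0$, and the weight tends to $r$ by the previous paragraph. Applying (\ref{sec-3-eq:Basic-Conclusion}) then yields a cord quotient converging to $K$. The endpoints are handled the same way by driving $\rho$ to its extremes: the value $R$ (i.e.\ $r=1$) via $j_{n}/i_{n}\to\infty$, e.g.\ $i_{n}=n$, $j_{n}=n^{2}$, and the value $L$ (i.e.\ $r=0$) via $j_{n}/i_{n}\to0$, e.g.\ $i_{n}=n^{2}$, $j_{n}=n$; note that $L$ finite is what makes $(1-r)L\to0$ here.

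The main obstacle lies entirely in the second paragraph: controlling the weight when the two indices are coupled through a prescribed ratio, that is, showing that the error terms hidden in $p(i_{n})=a\,i_{n}^{m}(1+o(1))$ and $q(j_{n})=b\,j_{n}^{m}(1+o(1))$ do not disturb the limit after multiplication by $(i_{n}/j_{n})^{m}$. This is precisely where the polynomial decay hypothesis is used in an essential way; by contrast, the reduction via (\ref{sec-3-eq:Basic-Conclusion}) and the surjectivity of $\rho\mapsto\frac{b\rho^{m}}{b\rho^{m}+a}$ onto $(0,1)$ are routine.
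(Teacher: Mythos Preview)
Your argument is correct and follows the same overall reduction as the paper: via (\ref{sec-3-eq:Basic-Conclusion}) it suffices to realize every $r\in(0,1)$ as a limit of the weights $\frac{h_{i_n}}{h_{i_n}+k_{j_n}}$. The execution differs, however. The paper restricts to index pairs of the form $(i_n,j_n)=(in,jn)$ with \emph{fixed} integers $i,j$, obtaining the countable set of achievable weights $\frac{j^{m}b}{j^{m}b+i^{m}a}$, and then spends most of the proof establishing that this set is dense in $(0,1)$ via a Mean Value Theorem estimate, finally invoking Theorem~\ref{thm:Cord-derivatives-closed} to pass from a dense set of cord derivatives to the full interval. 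Your route is more direct: by allowing the ratio $j_n/i_n$ to approximate an arbitrary \emph{real} $\rho>0$ (e.g.\ $i_n=n$, $j_n=\lfloor\rho n\rceil$), you hit every $r\in(0,1)$ exactly, so neither the density argument nor the appeal to closedness is needed. The only small point to tidy is that $j_n=\lfloor\rho n\rceil$ need not be strictly increasing, so $(k_{j_n})$ is not literally a subsequence; but since $j_n\to\infty$ one still has $f(-k_{j_n})/(-k_{j_n})\to L$, and the definition of cord derivative only asks for positive sequences tending to $0$, as you note.
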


\begin{proof}
By Equation (\ref{sec-3-eq:Basic-Conclusion}) and Theorem \ref{thm:Cord-derivatives-closed}
it is sufficient to show that given any $0<r<1$ we can find subsequences
$h_{i_{n}}$ and $k_{j_{n}}$ such that 
\[
\frac{h_{i_{n}}}{h_{i_{n}}+k_{j_{n}}}\to r.
\]
For integers $i,j$ we have 
\[
\frac{p\left(in\right)}{n^{m}}=i^{m}\frac{p\left(in\right)}{\left(in\right)^{m}}\underset{n\to\infty}{\longrightarrow}i^{m}a\text{ and }\frac{q\left(jn\right)}{n^{m}}\underset{n\to\infty}{\longrightarrow}j^{m}b.
\]
Hence
\[
\frac{h_{in}}{h_{in}+k_{jn}}=\frac{q\left(jn\right)}{q\left(jn\right)+p\left(in\right)}\underset{n\to\infty}{\longrightarrow}\frac{j^{m}b}{j^{m}b+i^{m}a}.
\]
It remains to show we can pick $i,j$ arbitrarily large such that
$\frac{j^{m}b}{j^{m}b+i^{m}a}$ is within $\varepsilon>0$ of $r.$
To this end, let $j'$ be so large that $\tfrac{1}{j'^{m}}<\varepsilon$
and $r<\frac{j'^{m}b}{j'^{m}b+a}.$ Let $i$ be such that $\frac{j'^{m}b}{j'^{m}b+i^{m}a}<r$
and let $j>j'$ be such that 
\[
\frac{j^{m}b}{j^{m}b+i^{m}a}<r\leq\frac{\left(j+1\right)^{m}b}{\left(j+1\right)^{m}b+i^{m}a}.
\]
We complete the proof by showing 
\[
\frac{\left(j+1\right)^{m}b}{\left(j+1\right)^{m}b+i^{m}a}-\frac{j^{m}b}{j^{m}b+i^{m}a}<\varepsilon.
\]
Let 
\[
f\left(t\right)=\frac{bt}{bt+i^{m}a},
\]
then we must show $f\left(j+1\right)-f\left(j\right)<\varepsilon.$
Now
\[
0<f'\left(t\right)=\frac{bai^{m}}{\left(bt+i^{m}a\right)^{2}}<\frac{bai^{m}}{2bti^{m}a}=\frac{1}{2t}
\]
uniformly in $i.$ By the Mean Value Theorem
\[
f\left(j+1\right)-f\left(j\right)=f'\left(c\right)<\frac{1}{2c}<\frac{1}{2j}<\frac{1}{2j'}<\varepsilon.
\]
This completes the proof. 
\end{proof}
\begin{example}
\label{sec-3-exa:intervals}Let $a<b$ and $c<d$ be real numbers.
Let $f_{a,b}$ be as in the proof of Theorem \ref{sec-1-thm:sine}.
The the right hand secant derivatives of $f_{a,b}$ at $0$ equals
the interval $\left[a,b\right]$ and the set of left hand secant derivatives
of $x\to f_{-d,-c}\left(-x\right)=f_{c,d}\left(x\right)$ at $0$
equals the interval $\left[c,d\right].$ Let 
\[
g\left(x\right):=\begin{cases}
f_{a,b}\left(x\right) & \text{when }x\geq0\\
f_{-d,-c}\left(-x\right) & \text{when }x<0
\end{cases}.
\]
We claim that the set of cord derivatives of $g$ is the the convex
hull $\left[\min\left\{ a,c\right\} ,\max\left\{ b,d\right\} \right]$
of the intervals $\left[a,b\right]$ and $\left[c,d\right].$ 
\end{example}

\begin{proof}
If one of $\left[a,b\right]$ and $\left[c,d\right]$ is a subinterval,
the claim follows from Theorem \ref{sec-3-THM:secant-subset-cord}
and Proposition \ref{sec-3-prop:Cord-between-Secants}. 

Since $\sin\left(\frac{1}{x}\right)=y$ has solutions $x=\frac{1}{\arcsin\left(y\right)+2\pi k}$
where $k$ is an integer, there are harmonic progressions $\alpha_{n}=\frac{1}{p+qn}$
and $\beta_{n}=\frac{1}{r+sn}$ such that $f_{a,b}\left(\alpha_{n}\right)=a\alpha_{n}$
and $f_{a,b}\left(\beta_{n}\right)=b\beta_{n}$ and similarly for
$f_{-d,-c}.$ Consequently, the claim follows from Theorem \ref{sec-3-thm:poly}.
\end{proof}

If follows from Theorem \ref{sec-3-thm:poly} that if $h_{n}$ and
$k_{n}$ decay at the same polynomial rate, then the set of cord derivatives
is an interval. It follows from our next result that, if the sequences
$h_{n}$ and $k_{n}$ decay exponentially, then the set of cord derivatives
need not be an interval. 
\begin{thm}
Suppose $a,b>1$ are real numbers. Let $h_{n}:=\tfrac{1}{a^{n}}$
and $k_{n}:=\tfrac{1}{b^{n}}.$ Let $f$ be a function defined on
$\left\{ 0,h_{1},-k_{1},h_{2},-k_{2},\ldots\right\} $ and let $R,L$
be real numbers. Assume $f\left(0\right)=0$ and 
\[
\frac{f\left(h_{n}\right)}{h_{n}}\to R\text{ and }\frac{f\left(-k_{n}\right)}{-k_{n}}\to L.
\]
\begin{itemize}
\item If $\frac{\log\left(a\right)}{\log\left(b\right)}$ is a rational
number, then $R$ and $L$ are the only accumulation points of the
set of cord derivatives of $f$ at $0.$ 
\item If $\frac{\log a}{\log b}$ is an irrational number, then every real
number between $L$ and $R$ is a cord derivative of $f$ at $0.$
\end{itemize}
\end{thm}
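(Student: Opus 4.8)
The plan is to reduce the entire statement to a question about the convex weights
\[
w_{i,j}:=\frac{h_{i}}{h_{i}+k_{j}}
\]
that appear in the basic calculation (\ref{sec-3-eq:Basic-Calculation}). Since $h_{i}=a^{-i}$ and $k_{j}=b^{-j}$, one computes $w_{i,j}=\bigl(1+a^{i}/b^{j}\bigr)^{-1}=g(\tau_{i,j})$, where $\tau_{i,j}:=i\log a-j\log b$ and $g(t):=(1+e^{t})^{-1}$. The function $g$ is a strictly decreasing homeomorphism of $\overline{\mathbb{R}}$ onto $[0,1]$ with $g(+\infty)=0$ and $g(-\infty)=1$. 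Assuming $R\neq L$ (the case $R=L$ being trivial, since then every cord quotient equals $R=L$ by Proposition \ref{sec-3-prop:Cord-between-Secants}), the affine map makes
\[
\Phi(t):=L+g(t)\,(R-L)
\]
a homeomorphism of $\overline{\mathbb{R}}$ onto the closed interval with endpoints $R$ and $L$, with $\Phi(+\infty)=L$ and $\Phi(-\infty)=R$. By (\ref{sec-3-eq:Basic-Conclusion}), whenever $i_{n},j_{n}\to\infty$ and $\tau_{i_{n},j_{n}}\to t\in\overline{\mathbb{R}}$, the cord quotient converges to $\Phi(t)$; conversely, since the secant ratios tend to $R$ and $L$ and $R\neq L$, convergence of a cord quotient forces convergence of the corresponding weights, hence of the $\tau_{i_{n},j_{n}}$. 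Therefore the set of cord derivatives is exactly $\Phi(T)$, where $T\subseteq\overline{\mathbb{R}}$ denotes the set of limit points of the doubly-indexed family $\{\tau_{i,j}\}$ taken with $i,j\to\infty$. As $\Phi$ is a homeomorphism, it remains only to analyze the accumulation structure of $T$ in the two cases.

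In the rational case, write $\frac{\log a}{\log b}=\frac{p}{q}$ in lowest terms, so that $\tau_{i,j}=\frac{\log b}{q}\,(ip-jq)$ lies in the discrete group $\frac{\log b}{q}\mathbb{Z}$. Since $\gcd(p,q)=1$, for each integer $m$ the equation $ip-jq=m$ admits solutions with $i,j$ arbitrarily large, obtained by translating one solution along $(i,j)\mapsto(i+q,j+p)$; hence every point of $\frac{\log b}{q}\mathbb{Z}$ is attained infinitely often with $i,j\to\infty$, and $\tau_{i,j}\to\pm\infty$ is likewise realizable. Thus $T=\frac{\log b}{q}\mathbb{Z}\cup\{\pm\infty\}$, whose only accumulation points in $\overline{\mathbb{R}}$ are $\pm\infty$. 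Applying the homeomorphism $\Phi$, the set of cord derivatives is discrete with its only accumulation points $\Phi(\pm\infty)=\{L,R\}$, which is the first assertion.

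In the irrational case, set $\alpha=\log a$ and $\beta=\log b$, so $\alpha/\beta$ is irrational and $\alpha,\beta>0$. The goal is to show $T\supseteq\mathbb{R}$, for then $\Phi(T)$ contains the open interval between $R$ and $L$, while the endpoints $R,L$ are cord derivatives as well (for instance by Theorem \ref{sec-3-THM:secant-subset-cord}), yielding the second assertion. Fix a target $\tau\in\mathbb{R}$ and $\varepsilon>0$. By Weyl's equidistribution theorem the sequence $(i\alpha/\beta\bmod 1)_{i\geq1}$ is dense in $[0,1)$, so there are arbitrarily large $i$ for which $i\alpha/\beta$ lies within $\varepsilon/\beta$ of $\tau/\beta$ modulo $1$; for such an $i$, choosing $j$ to be the nearest integer to $(i\alpha-\tau)/\beta$ gives $|\tau_{i,j}-\tau|=\beta\,|(i\alpha-\tau)/\beta-j|<\varepsilon$. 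Since $\tau$ is fixed and $i\to\infty$, the chosen $j$ tracks $i\alpha/\beta$ up to a bounded shift and is therefore eventually positive and unbounded, so $\tau_{i,j}$ approximates $\tau$ with $i,j$ arbitrarily large. Thus $\tau\in T$.

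The algebraic reduction and the homeomorphism properties of $g$ and $\Phi$ are routine. The genuine obstacle is the irrational case: density of the subgroup $\alpha\mathbb{Z}+\beta\mathbb{Z}$ in $\mathbb{R}$ is immediate, but the definition of cord derivative forces both indices to be \emph{positive} and to tend to infinity while $\tau_{i,j}$ hits a prescribed target. The equidistribution argument is precisely what supplies this, and the one point to verify with care is that the integer $j$ selected above is eventually positive and unbounded --- which holds because $j$ is within $1$ of $(i\alpha-\tau)/\beta\sim i\alpha/\beta\to\infty$.
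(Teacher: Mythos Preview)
Your proof is correct and follows essentially the same route as the paper: both reduce the problem to the set of values $\tau_{i,j}=i\log a-j\log b$ (the paper writes the equivalent $i\frac{\log a}{\log b}-j$) and then split on whether $\frac{\log a}{\log b}$ is rational (giving a discrete lattice) or irrational (giving density via Oresme/Weyl). Your write-up is somewhat more thorough than the paper's---you package the correspondence as an explicit homeomorphism $\Phi$ and you verify carefully that in the irrational case one can take \emph{both} indices $i_n,j_n\to\infty$, a point the paper leaves implicit.
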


\begin{proof}
Note
\[
\frac{h_{i_{n}}}{h_{i_{n}}+k_{i_{n}}}=\frac{b^{j_{n}}}{b^{j_{n}}+a^{i_{n}}}=\frac{1}{1+\frac{a^{i_{n}}}{b^{j_{n}}}}.
\]
Let $s:=\frac{1-r}{r}.$ Then 
\begin{equation}
\frac{h_{i_{n}}}{h_{i_{n}}+k_{i_{n}}}\to r\quad\text{iff}\quad\frac{a^{i_{n}}}{b^{j_{n}}}\to s\quad\text{iff}\quad i_{n}\frac{\log\left(a\right)}{\log\left(b\right)}-j_{n}\to\frac{\log\left(s\right)}{\log\left(b\right)}.\label{sec-3-eq:r-to-s}
\end{equation}
If $\frac{\log\left(a\right)}{\log\left(b\right)}$ is an irrational
number, then the set 
\begin{equation}
\left\{ i\frac{\log\left(a\right)}{\log\left(b\right)}-j:i,j\in\mathbb{N}\right\} \label{sec-3-eq:rotations}
\end{equation}
is dense in the real line. 

On the other hand, if $\frac{\log\left(a\right)}{\log\left(b\right)}=\frac{p}{q}$
is a rational number, then the set in Equation (\ref{sec-3-eq:rotations})
is a subset of the fractions with denominator $q.$ Using (\ref{sec-3-eq:r-to-s})
and that the set of cord derivatives is a closed set (by Theorem \ref{thm:Cord-derivatives-closed})
the result follows. 
\end{proof}
The density of the set in Equation (\ref{sec-3-eq:rotations}) was
first proved by Nicole Oresme around $1360$ in his paper \emph{De
commensurabilitate vel incommensurabilitate motuum celi}. For an English
translation of Oresme's proof see \cite{Gra61}. A detailed analysis
of Oresme's proof is in \cite{Pla93}. More contemporary proofs and
additional historical discussion can be found in \cite{Pet83}.

\bibliographystyle{amsalpha}
\bibliography{SequentialDerivatives}

\end{document}